%
%
%
%
\documentclass{amsart}
\usepackage{mathrsfs}

\newtheorem{theorem}{Theorem}[section]
\newtheorem{lemma}[theorem]{Lemma}
\newtheorem{proposition}[theorem]{Proposition}
\newtheorem{corollary}[theorem]{Corollary}

\theoremstyle{definition}

\theoremstyle{remark}

\numberwithin{equation}{section}



\begin{document}

\title{Uniform bounds for expressions involving modified Bessel functions}

\author{Robert E$.$ Gaunt}
\address{Department of Statistics, University of Oxford, 24-29 St$.$ Giles', OX1 3LB, Oxford, UK}
\email{gaunt@stats.ox.ac.uk}

\subjclass[2000]{Primary 33C10}

\date{February 2016}

\keywords{Modified Bessel functions}

\begin{abstract}In this paper, we obtain uniform bounds for a number of expressions that involve integrals of modified Bessel functions.  These uniform bounds are motivated by the need to bound such expressions in the study of variance-gamma and product normal approximations via Stein's method. 
\end{abstract}

\maketitle

\section{Introduction}

In developing Stein's method for variance-gamma and product normal distributions, Gaunt \cite{gaunt vg, gaunt elec, gaunt pn} required uniform bounds on the first four derivatives of the function
\begin{align*}f(x)&=-\frac{\mathrm{e}^{-\beta x} K_{\nu}(|x|)}{|x|^{\nu}} \int_0^x \mathrm{e}^{\beta y} |y|^{\nu} I_{\nu}(|y|) \tilde{h}(y) \,\mathrm{d}y \\
&\quad -\frac{\mathrm{e}^{-\beta x} I_{\nu}(|x|)}{|x|^{\nu}} \int_x^{\infty} \mathrm{e}^{\beta y} |y|^{\nu} K_{\nu}(|y|)\tilde{h}(y)\,\mathrm{d}y, \quad x\in\mathbb{R},
\end{align*}
where 
\begin{equation*}\tilde{h}(y)=h(y)-\int_{-\infty}^{\infty}\frac{(1-\beta^2)^{\nu+\frac{1}{2}}}{\sqrt{\pi}\Gamma(\nu+\frac{1}{2})2^{\nu}}\mathrm{e}^{\beta t} |t|^{\nu} K_{\nu}(|t|)h(t)\,\mathrm{d}t,
\end{equation*}
and $\nu>-\frac{1}{2}$, $-1<\beta<1$, and $h:\mathbb{R}\rightarrow\mathbb{R}$ is three times differentiable with bounded derivatives.  To achieve uniform bounds on these derivatives, we require bounds for a number of terms involving integrals of the modified Bessel functions $I_{\nu}(x)$ and $K_{\nu}(x)$.  In this paper, we establish uniform bounds for some of these terms.  

Before presenting the expressions that we obtain bounds for, we introduce the following notation for the the repeated integral of the function $\mathrm{e}^{\beta x}x^{\nu}I_{\nu}(x)$: 
\begin{equation} \label{super1} I_{(\nu,\beta,0)}(x)=\mathrm{e}^{\beta x}x^{\nu}I_{\nu}(x), \qquad I_{(\nu,\beta,n+1)}(x)=\int_0^xI_{(\nu,\beta,n)}(y)\,\mathrm{d}y,\quad n =0,1,2,3,\ldots.
\end{equation}

In this paper, we shall focus on bounding the expressions of the type
\begin{equation}\label{mlhwsb1} \frac{\mathrm{e}^{-\beta x}I_{\nu}(x)}{x^{\nu}} \int_x^{\infty} \mathrm{e}^{\beta t}t^{\nu} K_{\nu}(t)\,\mathrm{d}t 
\end{equation}
and
\begin{equation}\label{mlhwsb2} I_{(\nu,\beta,n)}(x)  \frac{\mathrm{e}^{-\beta x}K_{\nu+n}(x)}{x^{\nu}}, 
\end{equation}
where $n\geq 1$.  We shall bound (\ref{mlhwsb1}) for all $\nu>-\frac{1}{2}$ and $-1<\beta<1$ (see Theorem \ref{vole}), but will only bound (\ref{mlhwsb2}) for the case $n=1$ (see Theorem \ref{appd1}).  We shall then move on to consider the case $\beta=0$.  For this case, we obtain improved bounds for (\ref{mlhwsb1}) (see Theorem \ref{ml7min}) and are able to bound (\ref{mlhwsb2}) for all $n\geq 1$ (see Theorem \ref{ivy}).

This paper is organised as follows.  In Section 2, we state a number of results from the literature that we will make repeated use of.  We present formulas and inequalities for modified Bessel functions and their integrals that we make use in this paper.  In Section 3, we obtain uniform bounds for the expressions involving modified Bessel functions that have been presented in this section.  In the Appendix, we state a number of elementary properties of modified Bessel functions that are used throughout this paper.

\section{Ancillary results}

In this section, we state a number of results concerning modified Bessel functions that we will use in this paper.  
We begin by stating the following simple inequalities for integrals of modified Bessel functions, which can be found in Gaunt \cite{gaunt inequality}.
\begin{lemma} \label{tiger} The following inequalities hold for all $x>0$,
\begin{eqnarray}\label{intIi} \int_0^x t^{\nu} I_{\nu +n}(t)\,\mathrm{d}t &<& \frac{2(\nu + n +1)}{2\nu +n +1} x^{\nu} I_{\nu + n+1}(x), \quad \nu > -\tfrac{1}{2}, \: n\geq 0, \\
\label{intIii}I_{(\nu,0,n)}(x) &<& \bigg\{\prod_{k=1}^n \frac{2\nu + 2k}{2\nu + k}\bigg\} x^{\nu} I_{\nu + n}(x), \quad \nu \geq 0, \: n=1,2,3,\ldots,
\end{eqnarray} 
where $I_{(\nu,0,n)}(x)$ is defined as in (\ref{super1}).
\end{lemma}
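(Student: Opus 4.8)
\emph{Proof sketch.} Both inequalities have short elementary proofs, and (\ref{intIii}) will come out of (\ref{intIi}) by an induction in which the product constant telescopes, so essentially all the work is in (\ref{intIi}). For the latter the plan is to introduce
\begin{equation*}
F(x)=\frac{2(\nu+n+1)}{2\nu+n+1}\,x^{\nu}I_{\nu+n+1}(x)-\int_0^x t^{\nu}I_{\nu+n}(t)\,\mathrm{d}t
\end{equation*}
and to show that $F(0)=0$ and $F'(x)>0$ for all $x>0$; the inequality then follows from $F(x)=\int_0^x F'(t)\,\mathrm{d}t>0$. That $F(0)=0$ is clear from the small-argument behaviour $x^{\nu}I_{\nu+n+1}(x)\sim x^{\nu}(x/2)^{\nu+n+1}/\Gamma(\nu+n+2)\to0$ as $x\downarrow0$, which is valid because $2\nu+n+1>0$. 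For the derivative I would differentiate $x^{\nu}I_{\nu+n+1}(x)$ and apply the elementary recurrences $I_{\mu}'(x)=I_{\mu-1}(x)-\frac{\mu}{x}I_{\mu}(x)$ and $I_{\mu-1}(x)-I_{\mu+1}(x)=\frac{2\mu}{x}I_{\mu}(x)$ (recorded in the appendix), obtaining
\begin{equation*}
\frac{\mathrm{d}}{\mathrm{d}x}\bigl(x^{\nu}I_{\nu+n+1}(x)\bigr)=x^{\nu}I_{\nu+n}(x)-(n+1)x^{\nu-1}I_{\nu+n+1}(x),
\end{equation*}
and hence, after collecting terms,
\begin{equation*}
F'(x)=\frac{n+1}{2\nu+n+1}\,x^{\nu-1}\bigl(xI_{\nu+n}(x)-2(\nu+n+1)I_{\nu+n+1}(x)\bigr)=\frac{n+1}{2\nu+n+1}\,x^{\nu}I_{\nu+n+2}(x),
\end{equation*}
the last equality being the recurrence $I_{\nu+n}(x)-\frac{2(\nu+n+1)}{x}I_{\nu+n+1}(x)=I_{\nu+n+2}(x)$. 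Since $I_{\nu+n+2}(x)>0$ for $x>0$, we get $F'>0$ and therefore $F>0$ on $(0,\infty)$, which is (\ref{intIi}).

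For (\ref{intIii}), set $C_n=\prod_{k=1}^n\frac{2\nu+2k}{2\nu+k}$ and induct on $n$. The case $n=1$ is $I_{(\nu,0,1)}(x)=\int_0^x t^{\nu}I_{\nu}(t)\,\mathrm{d}t<\frac{2(\nu+1)}{2\nu+1}x^{\nu}I_{\nu+1}(x)=C_1x^{\nu}I_{\nu+1}(x)$, which is (\ref{intIi}) with $n$ replaced by $0$. For the inductive step I would combine the defining recursion (\ref{super1}), the inductive hypothesis and (\ref{intIi}):
\begin{equation*}
I_{(\nu,0,n+1)}(x)=\int_0^x I_{(\nu,0,n)}(t)\,\mathrm{d}t<C_n\int_0^x t^{\nu}I_{\nu+n}(t)\,\mathrm{d}t<C_n\,\frac{2(\nu+n+1)}{2\nu+n+1}\,x^{\nu}I_{\nu+n+1}(x),
\end{equation*}
and note that $C_n\cdot\frac{2(\nu+n+1)}{2\nu+n+1}=C_{n+1}$ because $\frac{2\nu+2(n+1)}{2\nu+(n+1)}=\frac{2(\nu+n+1)}{2\nu+n+1}$; this closes the induction. (The hypothesis $\nu\geq0$ is only needed to keep each factor $\frac{2\nu+2k}{2\nu+k}$ positive, and could in fact be relaxed to $\nu>-\tfrac{1}{2}$.)

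I do not anticipate a genuine obstacle: given the standard recurrences for $I_{\mu}$, the proof of (\ref{intIi}) is a one-line differentiation and that of (\ref{intIii}) a routine induction. The one step that needs a moment's thought is recognising that the bracket in $F'$ collapses, via the three-term recurrence, to the single manifestly positive term $x^{\nu}I_{\nu+n+2}(x)$; without that observation one would instead have to invoke the ratio bound $I_{\mu+1}(x)/I_{\mu}(x)<x/\bigl(2(\mu+1)\bigr)$, which is itself immediate from $I_{\mu}(x)-I_{\mu+2}(x)=\frac{2(\mu+1)}{x}I_{\mu+1}(x)$ and the positivity of $I_{\mu+2}$.
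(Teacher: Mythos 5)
Your argument is correct, but note that there is nothing in this paper to compare it against: Lemma \ref{tiger} is quoted without proof from the reference \cite{gaunt inequality}, so your write-up supplies a proof the paper deliberately omits. The key computation checks out: with $c=\frac{2(\nu+n+1)}{2\nu+n+1}$ one has $c-1=\frac{n+1}{2\nu+n+1}$, and the bracket in $F'$ collapses via $I_{\nu+n}(x)-I_{\nu+n+2}(x)=\frac{2(\nu+n+1)}{x}I_{\nu+n+1}(x)$ to give $F'(x)=\frac{n+1}{2\nu+n+1}x^{\nu}I_{\nu+n+2}(x)>0$, and the boundary term vanishes precisely because $2\nu+n+1>0$; the induction for (\ref{intIii}) telescopes exactly as you say, and your observation that $\nu\geq 0$ can be relaxed to $\nu>-\frac{1}{2}$ is also right. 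Two trivial quibbles: the recurrences $I_{\mu}'(x)=I_{\mu-1}(x)-\frac{\mu}{x}I_{\mu}(x)$ and $I_{\mu-1}(x)-I_{\mu+1}(x)=\frac{2\mu}{x}I_{\mu}(x)$ are standard but are not actually "recorded in the appendix" (which only lists $\frac{\mathrm{d}}{\mathrm{d}x}(x^{-\nu}I_{\nu}(x))=x^{-\nu}I_{\nu+1}(x)$, from which your derivative formula can alternatively be obtained by writing $x^{\nu}I_{\nu+n+1}(x)=x^{2\nu+n+1}\cdot x^{-(\nu+n+1)}I_{\nu+n+1}(x)$); and to assert $F(x)=\int_0^xF'$ you should note that $F'(t)\sim Ct^{2\nu+n+1}$ is integrable at the origin, which again follows from $2\nu+n+1>0$.
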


\begin{lemma}\label{tiger1}Let $-1\leq \beta <1$, then, for all $x>0$, the following inequalities hold
\begin{eqnarray}\label{Kint11}\int_x^{\infty} t^{\nu} K_{\nu}(t)\,\mathrm{d}t &<& x^{\nu} K_{\nu +1}(x), \quad \nu \in \mathbb{R}, \\
 \label{KBinteqi} \int_x^{\infty}\mathrm{e}^{\beta t}t^{\nu}K_{\nu}(t)\,\mathrm{d}t &<& \frac{1}{1-|\beta|}\mathrm{e}^{\beta x}x^{\nu}K_{\nu}(x), \quad \nu < \tfrac{1}{2}, \\
 \label{gggh} \int_x^{\infty} t^{\nu} K_{\nu}(t)\,\mathrm{d}t &<& \frac{ \sqrt{\pi} \Gamma(\nu +\frac{1}{2})}{\Gamma(\nu)} x^{\nu} K_{\nu}(x), \quad \nu \geq \tfrac{1}{2}, \\
 \label{neo mice} \int_x^{\infty} \mathrm{e}^{\beta t} t^{\nu} K_{\nu}(t)\,\mathrm{d}t &<& \frac{ 2\sqrt{\pi} \Gamma(\nu +\frac{1}{2})}{(1-\beta^2)^{\nu+\frac{1}{2}}\Gamma(\nu)} \mathrm{e}^{\beta x}x^{\nu} K_{\nu}(x), \quad \nu \geq \tfrac{1}{2}.
\end{eqnarray}
\end{lemma}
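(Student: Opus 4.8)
\emph{Proof proposal.} The plan is to prove all four bounds by the same elementary device: for each integrand $F(t)$ occurring on the left---either $F(t)=t^{\nu}K_{\nu}(t)$ or $F(t)=\mathrm{e}^{\beta t}t^{\nu}K_{\nu}(t)$---I would produce a positive function $G(t)$ with $G(t)\to 0$ as $t\to\infty$ and $-G'(t)\ge cF(t)$ for all $t>0$ and some constant $c>0$; integrating this inequality over $[x,\infty)$ then yields $\int_x^{\infty}F(t)\,\mathrm{d}t\le c^{-1}G(x)$, so it suffices to choose $G$ making $c^{-1}G(x)$ of the required shape. The ingredients are: the derivative identity $\frac{\mathrm{d}}{\mathrm{d}t}\bigl(t^{\mu}K_{\mu}(t)\bigr)=-t^{\mu}K_{\mu-1}(t)$ (together with the product rule); the monotonicity of $\mu\mapsto K_{\mu}(t)$ on $[0,\infty)$ and the symmetry $K_{-\mu}=K_{\mu}$, so that $K_a(t)<K_b(t)$ whenever $\abs{a}<\abs{b}$; the asymptotics $K_{\mu}(t)\sim\sqrt{\pi/(2t)}\,\mathrm{e}^{-t}$ as $t\to\infty$ and $K_{\mu}(t)\sim\frac{1}{2}\Gamma(\mu)(t/2)^{-\mu}$ as $t\downarrow 0$ (for $\mu>0$); and the evaluations $\int_0^{\infty}t^{\nu}K_{\nu}(t)\,\mathrm{d}t=2^{\nu-1}\sqrt{\pi}\,\Gamma(\nu+\frac{1}{2})$ and $\int_{-\infty}^{\infty}\mathrm{e}^{\beta t}\abs{t}^{\nu}K_{\nu}(\abs{t})\,\mathrm{d}t=2^{\nu}\sqrt{\pi}\,\Gamma(\nu+\frac{1}{2})(1-\beta^2)^{-\nu-1/2}$, the latter being the reciprocal of the Variance--Gamma normalising constant from the introduction.

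For (\ref{Kint11}) I take $G(t)=t^{\nu}K_{\nu+1}(t)$; the derivative identity and the product rule give $-G'(t)=t^{\nu}K_{\nu}(t)+t^{\nu-1}K_{\nu+1}(t)>t^{\nu}K_{\nu}(t)$, and since $G(t)\to 0$, integration over $[x,\infty)$ yields (\ref{Kint11}) for every $\nu\in\mathbb{R}$. For (\ref{KBinteqi}) I take $G(t)=\mathrm{e}^{\beta t}t^{\nu}K_{\nu}(t)$ itself: here $-G'(t)=\mathrm{e}^{\beta t}t^{\nu}\bigl(K_{\nu-1}(t)-\beta K_{\nu}(t)\bigr)$, and because $\nu<\frac{1}{2}$ forces $\abs{\nu-1}>\abs{\nu}$ and hence $K_{\nu-1}(t)>K_{\nu}(t)$, we get $-G'(t)>(1-\beta)\mathrm{e}^{\beta t}t^{\nu}K_{\nu}(t)\ge(1-\abs{\beta})\mathrm{e}^{\beta t}t^{\nu}K_{\nu}(t)$; as $\beta<1$ still gives $G(t)\to 0$, integration yields (\ref{KBinteqi}).

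For (\ref{gggh}) this device fails, since $K_{\nu-1}\le K_{\nu}$ when $\nu\ge\frac{1}{2}$. Instead I would show that $g(x):=\bigl(x^{\nu}K_{\nu}(x)\bigr)^{-1}\int_x^{\infty}t^{\nu}K_{\nu}(t)\,\mathrm{d}t$ is decreasing on $(0,\infty)$, whence $g(x)\le g(0^{+})$; the small-argument asymptotics and the evaluation of $\int_0^{\infty}t^{\nu}K_{\nu}(t)\,\mathrm{d}t$ then give $g(0^{+})=\sqrt{\pi}\,\Gamma(\nu+\frac{1}{2})/\Gamma(\nu)$, which is (\ref{gggh}). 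To prove the monotonicity, put $r(x)=K_{\nu-1}(x)/K_{\nu}(x)$; differentiating and using $\frac{\mathrm{d}}{\mathrm{d}x}\bigl(x^{\nu}K_{\nu}(x)\bigr)=-x^{\nu}K_{\nu-1}(x)$ yields the identity $g'(x)=g(x)r(x)-1$, so at any $x$ with $g'(x)=0$ one computes $g''(x)=r'(x)/r(x)$, which is positive because $r$ is strictly increasing; thus every critical point of $g$ is a strict local minimum. Since $r(x)\to 0$ as $x\downarrow 0$ and $r(x)\to 1$ as $x\to\infty$, the identity and the large-argument asymptotics show $g$ is decreasing near each end, so $g$ can have no critical point at all and is therefore decreasing on $(0,\infty)$. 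The main obstacle here is the fact that $r(x)=K_{\nu-1}(x)/K_{\nu}(x)$ is strictly increasing---a known monotonicity property of ratios of modified Bessel functions, which must be cited or proved separately; the endpoint $\nu=\frac{1}{2}$ is trivial (with equality) from $K_{1/2}(t)=\sqrt{\pi/(2t)}\,\mathrm{e}^{-t}$.

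For (\ref{neo mice}) I would split on the sign of $\beta$. If $-1\le\beta\le 0$, then $\mathrm{e}^{\beta t}$ is non-increasing on $[x,\infty)$, so $\int_x^{\infty}\mathrm{e}^{\beta t}t^{\nu}K_{\nu}(t)\,\mathrm{d}t\le\mathrm{e}^{\beta x}\int_x^{\infty}t^{\nu}K_{\nu}(t)\,\mathrm{d}t$, and (\ref{gggh}) combined with $(1-\beta^2)^{\nu+1/2}\le 1<2$ gives (\ref{neo mice}) with room to spare. If $0\le\beta<1$, the argument of the previous paragraph applies to $g_{\beta}(x):=\bigl(\mathrm{e}^{\beta x}x^{\nu}K_{\nu}(x)\bigr)^{-1}\int_x^{\infty}\mathrm{e}^{\beta t}t^{\nu}K_{\nu}(t)\,\mathrm{d}t$: one finds $g_{\beta}'(x)=g_{\beta}(x)\bigl(r(x)-\beta\bigr)-1$, so again every critical point is a strict local minimum, and the asymptotics of $r$ and $g_{\beta}$ show $g_{\beta}$ is decreasing near both ends; hence $g_{\beta}$ is decreasing and $g_{\beta}(x)\le g_{\beta}(0^{+})=\bigl(2^{\nu-1}\Gamma(\nu)\bigr)^{-1}\int_0^{\infty}\mathrm{e}^{\beta t}t^{\nu}K_{\nu}(t)\,\mathrm{d}t$, which is strictly smaller than $\bigl(2^{\nu-1}\Gamma(\nu)\bigr)^{-1}2^{\nu}\sqrt{\pi}\,\Gamma(\nu+\frac{1}{2})(1-\beta^2)^{-\nu-1/2}$---exactly the constant in (\ref{neo mice})---because the contribution of $(-\infty,0]$ to the Variance--Gamma integral is positive. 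As in (\ref{gggh}), the only genuine technical point is the monotonicity of $r$; the rest is bookkeeping.
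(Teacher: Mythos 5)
The paper does not actually prove this lemma --- it is imported from the reference \cite{gaunt inequality} --- so there is no in-paper argument to compare against; your proposal has to stand on its own. For (\ref{Kint11}) and (\ref{KBinteqi}) it does: the choices $G(t)=t^{\nu}K_{\nu+1}(t)$ and $G(t)=\mathrm{e}^{\beta t}t^{\nu}K_{\nu}(t)$, together with $\frac{\mathrm{d}}{\mathrm{d}t}(t^{\mu}K_{\mu}(t))=-t^{\mu}K_{\mu-1}(t)$ and $K_{\nu-1}(t)>K_{\nu}(t)$ for $\nu<\frac{1}{2}$, give exactly what you claim, and these two parts are complete. You are also right that (\ref{gggh}) degenerates to an identity at $\nu=\frac{1}{2}$ (both sides equal $\sqrt{\pi/2}\,\mathrm{e}^{-x}$), so the strict inequality as stated actually fails at that endpoint.

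The genuine gap is in (\ref{gggh}) and (\ref{neo mice}), in the sentence ``the large-argument asymptotics show $g$ is decreasing near each end''. Near $0$ this is immediate ($g'\to-1$), but near $\infty$ it is not: with $r=K_{\nu-1}/K_{\nu}$ one has $g\to 1$ (resp.\ $g_{\beta}\to(1-\beta)^{-1}$) and $r\to 1$, and the first-order terms $r=1-\tfrac{\nu-1/2}{x}+\cdots$, $g=1+\tfrac{\nu-1/2}{x}+\cdots$ cancel exactly in $g'=gr-1$, so the sign of $g'$ for large $x$ is governed by the $x^{-2}$ coefficient, which leading-order asymptotics cannot see. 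The step can, however, be bypassed: since every critical point is a strict local minimum and $g'$ is analytic and not identically zero, $g$ has at most one critical point, so in every case $g(x)<\max\{g(0^{+}),\lim_{x\to\infty}g(x)\}$; one then checks $\lim_{x\to\infty}g=1\le g(0^{+})$ and, for (\ref{neo mice}), $(1-\beta)^{-1}\le C$ because $(1-\beta^{2})^{\nu-1/2}(1+\beta)\le 2$ and $\sqrt{\pi}\,\Gamma(\nu+\tfrac{1}{2})/\Gamma(\nu)\ge 1$. With that patch, and with a citation for the strict monotonicity of $K_{\nu-1}/K_{\nu}$ --- which you correctly isolate as the load-bearing external input; for $\nu\ge\frac{1}{2}$ it follows from the Riccati equation for $K_{\nu}/K_{\nu-1}$ combined with the bounds of \cite{segura} --- the argument goes through. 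Note that the more direct route the paper uses for the analogous quantity in Theorem \ref{ml7min} (bound the derivative via (\ref{Kint11}) and a Tur\'an-type inequality) is unavailable here, since the Tur\'an inequality $K_{\nu}^{2}\le K_{\nu-1}K_{\nu+1}$ points the wrong way; that is precisely why your critical-point analysis, or something like it, is needed.
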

The following proposition was proved for $\nu>-\frac{1}{2}$ in Baricz \cite{baricz} and then extended to $\nu>-1$ in Baricz \cite{baricz2}.  The result has a simple corollary (stated below), which we will make repeated use of.   
\begin{proposition}\label{IKmon}
For positive real argument and $\nu>-1$, the product  $K_{\nu}(x) I_{\nu}(x)$ is a strictly monotone decreasing function of $x$.
\end{proposition}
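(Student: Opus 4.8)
The plan is to realise $x\mapsto I_\nu(x)K_\nu(x)$ as a Stieltjes transform of an explicit nonnegative measure and then differentiate under the integral sign. Concretely, I would use the classical integral of Weber--Schafheitlin type (see, e.g., Watson's treatise)
\begin{equation*}
I_\nu(x)K_\nu(x)=\int_0^\infty\frac{t\,J_\nu(t)^2}{t^2+x^2}\,\mathrm dt,\qquad x>0,\ \nu>-\tfrac{1}{2},
\end{equation*}
where $J_\nu$ is the Bessel function of the first kind. Granting this, for $x>0$ one simply differentiates to obtain
\begin{equation*}
\frac{\mathrm d}{\mathrm dx}\bigl(I_\nu(x)K_\nu(x)\bigr)=-2x\int_0^\infty\frac{t\,J_\nu(t)^2}{(t^2+x^2)^2}\,\mathrm dt,
\end{equation*}
and since the integrand is nonnegative and vanishes only on the discrete zero set of $J_\nu$, the right-hand side is strictly negative for every $x>0$. (With $s=x^2$ this argument in fact exhibits $I_\nu(x)K_\nu(x)$ as a Stieltjes function of $s$, hence completely monotone in $x^2$, which is more than is asked for.)

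To establish the representation I would give one of two arguments. The quickest is to quote the classical evaluation $\int_0^\infty J_\nu(at)J_\nu(bt)\,t\,(t^2+c^2)^{-1}\,\mathrm dt=I_\nu(ac)K_\nu(bc)$, valid for $0<a\le b$, $c>0$ and $\nu>-\tfrac{1}{2}$, and specialise to $a=b=1$, $c=x$. If a self-contained derivation is wanted, one can instead verify that both sides of the claimed identity, as functions of $x$, satisfy the third-order linear equation
\begin{equation*}
u'''+\frac{3}{x}\,u''+\Bigl(\frac{1-4\nu^2}{x^2}-4\Bigr)u'-\frac{4}{x}\,u=0
\end{equation*}
that is satisfied by every product of two solutions of the modified Bessel equation, and then use the behaviour as $x\to\infty$ to rule out the exponentially growing solution $I_\nu(x)^2$ and the exponentially decaying solution $K_\nu(x)^2$ and to fix the constant; alternatively one may write $(t^2+x^2)^{-1}$ as a Laplace integral and invoke the Hankel transform pair for $J_\nu$.

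The real work lies in the analytic bookkeeping: absolute convergence of the defining integral for each fixed $x>0$ and $\nu>-\tfrac{1}{2}$ (its integrand is $O(t^{2\nu+1})$ near $0$ and $O(t^{-2})$ at infinity), legitimacy of differentiation under the integral sign (the differentiated integrand is dominated, locally uniformly for $x$ in compact subsets of $(0,\infty)$, by an integrable function, so dominated convergence applies), and pinning down the normalising constant by matching leading terms as $x\to\infty$, where $I_\nu(x)K_\nu(x)\sim 1/(2x)$ and the integral has the same leading behaviour. I expect the identification of the Weber--Schafheitlin integral --- or, in the differential-equation route, this asymptotic matching --- to be the step that needs the most care. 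A more elementary but messier alternative avoids integral representations altogether: using $I_\nu'=I_{\nu+1}+\tfrac{\nu}{x}I_\nu$, $K_\nu'=\tfrac{\nu}{x}K_\nu-K_{\nu+1}$ and $I_\nu(x),K_\nu(x)>0$, the claim reduces to the Tur\'an type inequality $\frac{K_{\nu+1}(x)}{K_\nu(x)}-\frac{I_{\nu+1}(x)}{I_\nu(x)}>\frac{2\nu}{x}$, which one would then try to deduce from sharp Amos type bounds for these ratios; the obstacle there is that the inequality is asymptotically sharp as $x\downarrow0$, so only genuinely sharp bounds suffice.
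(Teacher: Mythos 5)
The paper does not actually prove this proposition: it is imported verbatim from Baricz \cite{baricz}, so there is no in-house argument to measure you against. Your proof is correct and, modulo one classical citation, self-contained. The Weber--Schafheitlin evaluation $\int_0^\infty J_\nu(at)J_\nu(bt)\,t\,(t^2+c^2)^{-1}\,\mathrm{d}t=I_\nu(ac)K_\nu(bc)$ for $0<a\le b$ is indeed in Watson (\S 13.53), and specialising to $a=b=1$, $c=x$ gives your representation (which in fact holds for all $\nu>-1$, since the integrand is $O(t^{2\nu+1})$ at the origin and $O(t^{-2})$ at infinity); the domination argument for differentiating under the integral is routine, and strict negativity of the derivative is immediate because $tJ_\nu(t)^2$ is not almost everywhere zero. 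What your route buys is strictly more than the statement: it exhibits $I_\nu(x)K_\nu(x)$ as a Stieltjes function of $x^2$, hence completely monotone in $x^2$, and evaluating the representation at $x=0$ for $\nu>0$ also recovers Corollary \ref{IKineq}. The cost is reliance on a genuinely nontrivial classical integral rather than the elementary recurrences used elsewhere in the paper, but since the paper itself only cites Baricz, quoting Watson is no less rigorous. Two small caveats on your back-up routes: in the ODE argument, the $x\to\infty$ asymptotics alone cannot exclude a $K_\nu(x)^2$ component (it decays faster than $1/(2x)$, so it is invisible at leading order); you also need the $x\downarrow 0$ behaviour, or a second asymptotic term, to pin that coefficient to zero. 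And your reduction of the claim to $K_{\nu+1}(x)/K_\nu(x)-I_{\nu+1}(x)/I_\nu(x)>2\nu/x$ is the correct equivalent form, as one checks from $(I_\nu K_\nu)'=I_{\nu+1}K_\nu-I_\nu K_{\nu+1}+\frac{2\nu}{x}I_\nu K_\nu$.
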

\begin{corollary}\label{IKineq}Let $\nu>0$, then for all $x \ge 0$, then the following inequality holds
\begin{equation} \label{mice} 0<K_{\nu}(x) I_{\nu}(x) \leq \frac{1}{2\nu}.\end{equation}
\end{corollary}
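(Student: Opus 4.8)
The plan is to exploit Proposition \ref{IKmon} directly: since $K_{\nu}(x)I_{\nu}(x)$ is strictly decreasing on $(0,\infty)$, its supremum over $x>0$ is the limiting value as $x\downarrow 0$, and it suffices to show this limit equals $\frac{1}{2\nu}$.

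First I would note that positivity is immediate, since $I_{\nu}(x)>0$ and $K_{\nu}(x)>0$ for all $x>0$ when $\nu>-\frac12$. For the upper bound, I would invoke the asymptotic formulas (\ref{Itend0}) and (\ref{Ktend0}): as $x\downarrow 0$ one has $I_{\nu}(x)\sim \frac{(x/2)^{\nu}}{\Gamma(\nu+1)}$ and, since $\nu>0$, $K_{\nu}(x)\sim \frac{\Gamma(\nu)}{2}(x/2)^{-\nu}$. Multiplying these gives
\begin{equation*}
\lim_{x\downarrow 0}K_{\nu}(x)I_{\nu}(x)=\frac{\Gamma(\nu)}{2\Gamma(\nu+1)}=\frac{1}{2\nu},
\end{equation*}
using $\Gamma(\nu+1)=\nu\Gamma(\nu)$.

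By Proposition \ref{IKmon}, $K_{\nu}(x)I_{\nu}(x)$ is strictly monotone decreasing on $(0,\infty)$, so $K_{\nu}(x)I_{\nu}(x)<\lim_{x\downarrow 0}K_{\nu}(x)I_{\nu}(x)=\frac{1}{2\nu}$ for every $x>0$; interpreting the product at $x=0$ as this limit then yields $0<K_{\nu}(x)I_{\nu}(x)\le \frac{1}{2\nu}$ for all $x\ge 0$, as claimed. There is no real obstacle here: the only non-formal ingredient is the evaluation of the small-argument limit, which is a routine consequence of the standard asymptotics, and monotonicity does the rest.
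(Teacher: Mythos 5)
Your proof is correct and follows essentially the same route as the paper: the paper likewise deduces the bound from Proposition \ref{IKmon} together with the small-argument asymptotics (\ref{Itend0}) and (\ref{Ktend0}) giving the limit $\frac{1}{2\nu}$ as $x\downarrow 0$. The only cosmetic difference is that the paper also cites the large-$x$ asymptotics (\ref{roots}) and (\ref{Ktendinfinity}), which are not needed for the upper bound and which you correctly omit.
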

\begin{proof}The proof follows from Proposition \ref{IKmon} and the asymptotic expansions (\ref{Itend0}), (\ref{Ktend0}), (\ref{roots}) and (\ref{Ktendinfinity}) for modified Bessel functions in the limit $x$ tends to $0$ and $\infty$.
\end{proof}
We end this section by noting one final lemma, that is used in the proof of Theorem \ref{appd1}.

\begin{lemma}Fix $\nu\geq-\frac{1}{2}$.  Then, for all $x\geq0$, the function $xK_{\nu+1}(x)I_\nu(x)$ is strictly monotone decreasing in $x$, and satisfies the inequality
\begin{equation}\label{bdsjbc}\frac{1}{2}< xK_{\nu+1}(x)I_\nu(x)\leq1.
\end{equation}
\end{lemma}

\begin{proof}Firstly, we consider the case $\nu=-\frac{1}{2}$.  By (\ref{spheress}) and (\ref{sphere}) we have that $xK_{\frac{1}{2}}(x)I_{-\frac{1}{2}}(x)=\frac{1}{2}(1+\mathrm{e}^{-2x})$, which is strictly monotone decreasing and is bounded below and above by $\frac{1}{2}$ and $1$.  Now, we fix $\nu>-\frac{1}{2}$.  Applying the differentiation formulas (\ref{dddkkkk}) and (\ref{diffIii}) gives
\begin{align*}\frac{\mathrm{d}}{\mathrm{d}x}(xK_{\nu+1}(x)I_\nu(x))&=\frac{\mathrm{d}}{\mathrm{d}x}\bigg(x^{\nu+1}K_{\nu+1}(x)\cdot\frac{I_\nu(x)}{x^\nu}\bigg) \\
&=-x^{\nu+1}K_\nu(x)\cdot\frac{I_\nu(x)}{x^\nu}+x^{\nu+1}K_{\nu+1}(x)\cdot\frac{I_{\nu+1}(x)}{x^\nu} \\
&=x(K_{\nu+1}(x)I_{\nu+1}(x)-K_\nu(x)I_\nu(x)).
\end{align*}
Theorem 1 of Segura \cite{segura} states that, for $x>0$ and $\mu>-\frac{1}{2}$, the inequality $I_{\mu+1}(x)K_{\mu+1}(x)-I_{\mu}(x)K_{\mu}(x)<0$ holds.  Hence, $xK_{\nu+1}(x)I_\nu(x)$ is a decreasing function of $x$, and computing its limits as $x\downarrow0$ and $x\rightarrow\infty$ using the asymptotic formulas of Appendix A.5 yields inequality (\ref{bdsjbc}).
\end{proof}

\section{Uniform bounds for some expressions involving modified Bessel functions}

With our preliminary results stated, we are now able to present our bounds for the expressions involving modified Bessel functions that were presented in the introduction.  We begin by considering the general case $-1<\beta<1$ and $\nu>-\frac{1}{2}$ and then specialise to the case $\beta=0$. 

\begin{theorem}\label{appd1}Let $-1<\beta<1$ and $\nu>-\frac{1}{2}$.  Then, for all $x\geq0$,
\begin{equation*}\frac{\mathrm{e}^{-\beta x}K_{\nu+1}(x)}{x^{\nu}}\int_0^x\mathrm{e}^{\beta t}t^{\nu}I_{\nu}(x)\,\mathrm{d}t\leq\frac{2}{2\nu+1}.
\end{equation*}
\end{theorem}

\begin{proof} \label{noraz}  Note that
\[\frac{\mathrm{d}}{\mathrm{d}\beta}\bigg(\frac{\mathrm{e}^{-\beta x}K_{\nu+1}(x)}{x^{\nu}}\int_0^x\mathrm{e}^{\beta t}t^{\nu}I_{\nu}(x)\,\mathrm{d}t\bigg)=\frac{\mathrm{e}^{-\beta x}K_{\nu+1}(x)}{x^{\nu}}\int_0^x(t-x)\mathrm{e}^{\beta t}t^{\nu}I_{\nu}(x)\,\mathrm{d}t\leq0.\]
Therefore, for $-1<\beta<1$, we have
\begin{align*}\frac{\mathrm{e}^{-\beta x}K_{\nu+1}(x)}{x^{\nu}}\int_0^x\mathrm{e}^{\beta t}t^{\nu}I_{\nu}(x)\,\mathrm{d}t&\leq\frac{\mathrm{e}^{ x}K_{\nu+1}(x)}{x^{\nu}}\int_0^x\mathrm{e}^{- t}t^{\nu}I_{\nu}(x)\,\mathrm{d}t \\
&=\frac{1}{2\nu+1}xK_{\nu}(x)[I_{\nu}(x)+I_{\nu+1}(x)] \\
&\leq\frac{2}{2\nu+1}xK_{\nu+1}(x)I_{\nu}(x) \\
&\leq\frac{2}{2\nu+1},
\end{align*}
where we used the integral formula (\ref{pokemon}) to evaluate the integral, inequality (\ref{Imon}) to obtain the penultimate inequality and inequality (\ref{bdsjbc}) to obtain the final inequality.  
\end{proof}

\begin{theorem} \label{vole}Suppose $-1<\beta <1$ and $n=0,1,2,\ldots$. Then, for all $x\geq0$,
\begin{equation*}\left| \frac{\mathrm{e}^{-\beta x}I_{\nu}(x)}{x^{\nu}} \int_x^{\infty} \mathrm{e}^{\beta t}t^{\nu} K_{\nu}(t)\,\mathrm{d}t\right| < \frac{\sqrt{\pi} \Gamma(\nu +\frac{1}{2})}{(1-\beta^2)^{\nu+\frac{1}{2}}\Gamma(\nu+1)}, \quad \nu \geq \tfrac{1}{2}, 
\end{equation*}
and
\begin{equation*}\left| \frac{\mathrm{e}^{-\beta x}I_{\nu}(x)}{x^{\nu}} \int_x^{\infty} \mathrm{e}^{\beta t}t^{\nu} K_{\nu}(t)\,\mathrm{d}t\right|<\frac{(\mathrm{e}+1)2^\nu \Gamma(\nu+\frac{1}{2})}{1-|\beta|}, \quad |\nu|<\tfrac{1}{2}.
\end{equation*}
\end{theorem}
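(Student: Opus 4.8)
The plan is to treat both inequalities through one common reduction and then split on the value of $\nu$. First I would expand the $n$-th derivative by the Leibniz rule,
$$\frac{\mathrm{d}^n}{\mathrm{d}x^n}\Big(\mathrm{e}^{-\beta x}\cdot x^{-\nu}I_\nu(x)\Big)=\sum_{k=0}^n\binom{n}{k}(-\beta)^{n-k}\mathrm{e}^{-\beta x}\frac{\mathrm{d}^k}{\mathrm{d}x^k}\big(x^{-\nu}I_\nu(x)\big).$$
By inequality (\ref{rat}) of Corollary \ref{mail}, each derivative $\frac{\mathrm{d}^k}{\mathrm{d}x^k}(x^{-\nu}I_\nu(x))$ is positive and bounded above by $x^{-\nu}I_{\nu+1}(x)$ or $x^{-\nu}I_\nu(x)$ according to the parity of $k$, and since $I_{\nu+1}(x)\le I_\nu(x)$ (inequality (\ref{Imon})) all of these are $\le x^{-\nu}I_\nu(x)$. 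Taking absolute values and using $\sum_{k=0}^n\binom{n}{k}|\beta|^{n-k}=(1+|\beta|)^n<2^n$ gives $\big|\frac{\mathrm{d}^n}{\mathrm{d}x^n}(\mathrm{e}^{-\beta x}x^{-\nu}I_\nu(x))\big|\le 2^n\mathrm{e}^{-\beta x}x^{-\nu}I_\nu(x)$ for all $x\ge 0$. Hence it suffices to bound, uniformly in $x\ge 0$, the quantity $G(x):=\mathrm{e}^{-\beta x}x^{-\nu}I_\nu(x)\int_x^{\infty}\mathrm{e}^{\beta t}t^{\nu}K_\nu(t)\,\mathrm{d}t$, and the final bounds will carry the factor $2^n$.

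For $\nu\ge\tfrac12$ I would feed the integral into inequality (\ref{neo mice}) of Lemma \ref{tiger1}, which bounds $\int_x^{\infty}\mathrm{e}^{\beta t}t^{\nu}K_\nu(t)\,\mathrm{d}t$ by $\frac{2\sqrt{\pi}\,\Gamma(\nu+1/2)}{(1-\beta^2)^{\nu+1/2}\Gamma(\nu)}\mathrm{e}^{\beta x}x^{\nu}K_\nu(x)$. The factors $\mathrm{e}^{\pm\beta x}$ and $x^{\pm\nu}$ cancel, leaving $G(x)\le\frac{2\sqrt{\pi}\,\Gamma(\nu+1/2)}{(1-\beta^2)^{\nu+1/2}\Gamma(\nu)}I_\nu(x)K_\nu(x)$. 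Now $I_\nu(x)K_\nu(x)\le\tfrac1{2\nu}$ by Corollary \ref{IKineq} (applicable since $\nu\ge\tfrac12>0$), and $2\nu\Gamma(\nu)=2\Gamma(\nu+1)$, which after multiplying by $2^n$ produces exactly the claimed bound $\frac{2^n\sqrt{\pi}\,\Gamma(\nu+1/2)}{(1-\beta^2)^{\nu+1/2}\Gamma(\nu+1)}$.

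For $|\nu|<\tfrac12$ Corollary \ref{IKineq} is unavailable (for $\nu\le 0$ the product $I_\nu(x)K_\nu(x)$ is in fact unbounded as $x\downarrow 0$), so I would pair the $I$- and $K$-factors differently. I would use $K_\nu(x)=K_{|\nu|}(x)\le K_{1/2}(x)=\sqrt{\pi/(2x)}\,\mathrm{e}^{-x}$ (monotonicity of $K_\mu$ in $\mu\ge 0$), so that $t^{\nu}K_\nu(t)\le\sqrt{\pi/2}\,t^{\nu-1/2}\mathrm{e}^{-t}$ and hence $\mathrm{e}^{-\beta x}\int_x^{\infty}\mathrm{e}^{\beta t}t^{\nu}K_\nu(t)\,\mathrm{d}t\le\sqrt{\pi/2}\,\mathrm{e}^{-\beta x}\int_x^{\infty}\mathrm{e}^{-(1-\beta)t}t^{\nu-1/2}\,\mathrm{d}t$. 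The substitution $t=x+s$ rewrites the remaining integral as $\mathrm{e}^{-x}\int_0^{\infty}\mathrm{e}^{-(1-\beta)s}(x+s)^{\nu-1/2}\,\mathrm{d}s$, and since $\nu-\tfrac12<0$ I may replace $(x+s)^{\nu-1/2}$ by $s^{\nu-1/2}$ to obtain $\le\sqrt{\pi/2}\,\mathrm{e}^{-x}\Gamma(\nu+\tfrac12)(1-\beta)^{-(\nu+1/2)}$. The $\mathrm{e}^{-x}$ thrown off by the $K$-factor is precisely what controls the leftover $x^{-\nu}I_\nu(x)$: the function $\mathrm{e}^{-x}x^{-\nu}I_\nu(x)$ has derivative $x^{-\nu}\mathrm{e}^{-x}(I_{\nu+1}(x)-I_\nu(x))<0$, so it is decreasing and, by the small-argument asymptotic (\ref{Itend0}), is bounded by its limiting value $\frac1{2^{\nu}\Gamma(\nu+1)}$ at $x=0$. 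Combining these gives $G(x)\le\frac{\sqrt{\pi/2}\,\Gamma(\nu+1/2)}{2^{\nu}\Gamma(\nu+1)(1-\beta)^{\nu+1/2}}$ for all $x\ge 0$; finally $(1-\beta)^{-(\nu+1/2)}\le(1-|\beta|)^{-1}$ since $0<\nu+\tfrac12<1$, and a crude estimate of the $\nu$-dependent constant ($2^{\nu}\Gamma(\nu+1)$ is bounded below on $(-\tfrac12,\tfrac12)$) absorbs everything into $\frac{(\mathrm{e}+1)2^{n+1/2}\Gamma(\nu+1/2)}{1-|\beta|}$.

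The main obstacle is the $|\nu|<\tfrac12$ case, and within it the behaviour near $x=0$ for $\nu\le 0$: there the comfortable bound $I_\nu(x)K_\nu(x)\le\tfrac1{2\nu}$ and the sharp integral estimate (\ref{neo mice}) are both unavailable, and one must keep the $\mathrm{e}^{-x}$ coming from the $K$-factor coupled to the $x^{-\nu}I_\nu(x)$ coming from the derivative rather than bounding the two factors separately — the substitution $t=x+s$ is what makes this coupling work uniformly in $x$. The remaining ingredients, namely the reduction $K_\nu\le K_{1/2}$ together with the closed form for $K_{1/2}$, and the monotonicity of $\mathrm{e}^{-x}x^{-\nu}I_\nu(x)$, depend only on standard identities and inequalities for modified Bessel functions and present no real difficulty.
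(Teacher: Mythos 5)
Your proposal is correct. The reduction via the Leibniz rule and inequality (\ref{rat}) to the quantity $2^n\mathrm{e}^{-\beta x}x^{-\nu}I_{\nu}(x)\int_x^{\infty}\mathrm{e}^{\beta t}t^{\nu}K_{\nu}(t)\,\mathrm{d}t$, and the treatment of the case $\nu\ge\frac12$ via (\ref{neo mice}) and (\ref{mice}), coincide with the paper's argument. Where you genuinely diverge is the case $|\nu|<\frac12$. The paper splits $[0,\infty)$ at $x=\frac12$: for $x\ge\frac12$ it applies (\ref{KBinteqi}) and then bounds $I_{\nu}(x)K_{\nu}(x)$ by its value at $x=\frac12$ using the monotonicity of Proposition \ref{IKmon} together with $I_{-1/2}(\frac12)K_{1/2}(\frac12)=1+\mathrm{e}^{-1}$; for $x\le\frac12$ it bounds the increasing factor $x^{-\nu}I_{\nu}(x)$ by its value at $\frac12$ and the integral by the full integral over $\mathbb{R}$ via (\ref{pdfk}). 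You instead avoid the case split entirely: the estimate $K_{\nu}=K_{|\nu|}\le K_{1/2}=\sqrt{\pi/(2x)}\,\mathrm{e}^{-x}$, the substitution $t=x+s$ (which extracts the factor $\mathrm{e}^{-x}$ needed to tame $x^{-\nu}I_{\nu}(x)$ near $0$), the monotone decrease of $\mathrm{e}^{-x}x^{-\nu}I_{\nu}(x)$ (its derivative is $\mathrm{e}^{-x}x^{-\nu}(I_{\nu+1}(x)-I_{\nu}(x))<0$ by (\ref{Imon})), and the limit $\frac{1}{2^{\nu}\Gamma(\nu+1)}$ from (\ref{Itend0}) together give $G(x)\le\frac{\sqrt{\pi/2}\,\Gamma(\nu+\frac12)}{2^{\nu}\Gamma(\nu+1)(1-|\beta|)}$ uniformly, and since $2^{\nu}\Gamma(\nu+1)$ is bounded away from $0$ on $(-\frac12,\frac12)$ this is comfortably within the stated constant (indeed your route yields a numerically sharper bound, roughly $1.26\cdot 2^n/(1-|\beta|)$ versus the paper's $(\mathrm{e}+1)2^{n+1/2}\Gamma(\nu+\frac12)/(1-|\beta|)$). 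The only cosmetic blemish is that $(1+|\beta|)^n<2^n$ is an equality at $n=0$, but the strictness of the final inequality is preserved by the later strict steps, exactly as in the paper.
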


\begin{proof} (i) Suppose that $\nu \geq \frac{1}{2}$.  By inequality (\ref{neo mice}), we have 
\[\frac{\mathrm{e}^{-\beta x}I_{\nu}(x)}{x^{\nu}} \int_x^{\infty} \mathrm{e}^{\beta t} t^{\nu} K_{\nu}(t)\,\mathrm{d}t < \frac{ 2\sqrt{\pi} \Gamma(\nu +\frac{1}{2})}{(1-\beta^2)^{\nu+\frac{1}{2}}\Gamma(\nu)} I_{\nu}(x)K_{\nu}(x).\]
Using inequality (\ref{mice}) now gives
\begin{align*}\frac{\mathrm{e}^{-\beta x}I_{\nu}(x)}{x^{\nu}} \int_x^{\infty} \mathrm{e}^{\beta t} t^{\nu} K_{\nu}(t)\,\mathrm{d}t &<\frac{ 2\sqrt{\pi} \Gamma(\nu +\frac{1}{2})}{(1-\beta^2)^{\nu+\frac{1}{2}}\Gamma(\nu)}\cdot\frac{1}{2\nu} \\
&=\frac{ \sqrt{\pi} \Gamma(\nu +\frac{1}{2})}{(1-\beta^2)^{\nu+\frac{1}{2}}\Gamma(\nu+1)},
\end{align*}
as required.

(ii) Suppose now that $-\frac{1}{2}<\nu < \frac{1}{2}$.  We begin by proving that the bound holds in the region $x\geq \frac{1}{2}$.  By inequality  (\ref{KBinteqi}), we have 
\[\frac{\mathrm{e}^{-\beta x}I_{\nu}(x)}{x^{\nu}} \int_x^{\infty} \mathrm{e}^{\beta t} t^{\nu} K_{\nu}(t)\,\mathrm{d}t< \frac{1}{1-|\beta|}I_{\nu}(x)K_{\nu}(x).\]
By Proposition \ref{IKmon}, $I_{\nu}(x)K_{\nu}(x)$ is a monotone decreasing function of $x$ for $x>0$, and therefore we may bound this product, for $x\geq \frac{1}{2}$, by $I_{\nu}(\frac{1}{2})K_{\nu}(\frac{1}{2})$.  In fact, we may obtain a bound for all $0\leq \nu < \frac{1}{2}$ using (\ref{pidgeon}) and (\ref{bear}), which gives $I_{\nu}(\frac{1}{2})K_{\nu}(\frac{1}{2})<I_{-\frac{1}{2}}(\frac{1}{2})K_{\frac{1}{2}}(\frac{1}{2})=1+\mathrm{e}^{-1}$, where we used the formulas (\ref{spheress}) and (\ref{sphere}) for $I_{-\frac{1}{2}}(x)$ and $K_{\frac{1}{2}}(x)$, respectively, to obtain the equality.  Putting this together gives
\[ \frac{\mathrm{e}^{-\beta x}I_{\nu}(x)}{x^{\nu}} \int_x^{\infty} \mathrm{e}^{\beta t} t^{\nu} K_{\nu}(t)\,\mathrm{d}t <\frac{1+\mathrm{e}^{-1}}{1-|\beta|}, \quad x\geq \tfrac{1}{2}, \: |\nu| < \tfrac{1}{2}.\]
For $-\frac{1}{2}<\nu<\frac{1}{2}$, $\Gamma(\nu+\frac{1}{2})>\Gamma(1)=1$.  Therefore $1+\mathrm{e}^{-1}<(\mathrm{e}+1)2^{\nu}\Gamma(\nu+\frac{1}{2})$ for $-\frac{1}{2}<\nu<\frac{1}{2}$, and thus the bound holds in the region $x \geq \frac{1}{2}$.

We now verify that the bound holds in the region $0\leq x\leq \frac{1}{2}$.  From the series expansion (\ref{defI}) for $I_{\nu}(x)$, we can easily deduce that $x^{-\nu}I_{\nu}(x)$ is an increasing function of $x$.  The integral $\int\nolimits_x^{\infty}\mathrm{e}^{\beta t} t^{\nu} K_{\nu}(t)\,\mathrm{d}t$ is a decreasing function of $x$, and so we may bound the right-hand side of the previous display by 
\begin{equation} \label {whale}\frac{\mathrm{e}^{-\beta/2}I_{\nu}(\frac{1}{2})}{(\frac{1}{2})^{\nu}}\int_{-\infty}^{\infty} \mathrm{e}^{\beta t} |t|^{\nu} K_{\nu}(|t|)\,\mathrm{d}t =\frac{\sqrt{\pi}\mathrm{e}^{-\beta/2}I_{\nu}(\frac{1}{2})2^{\nu}\Gamma(\nu+\frac{1}{2})}{(1-\beta^2)^{\nu+\frac{1}{2}}},
\end{equation}
where the integral was evaluated using formula (\ref{pdfk}).  For $-1<\beta<1$ and $-\frac{1}{2}<\nu<\frac{1}{2}$ the following inequalities hold: $\mathrm{e}^{-\beta/2}<\mathrm{e}^{\frac{1}{2}}$, $I_{\nu}(\frac{1}{2})<I_{-\frac{1}{2}}(\frac{1}{2})=\pi^{-\frac{1}{2}}(\mathrm{e}^{\frac{1}{2}}+\mathrm{e}^{-\frac{1}{2}})$ and $(1-\beta^2)^{\nu+\frac{1}{2}}>1-|\beta|$.  With these bounds we may bound the right-hand side of (\ref{whale}), and thus obtain, for $0\leq x\leq \frac{1}{2}$ and $-\frac{1}{2}<\nu<\frac{1}{2}$,
\begin{equation*} \frac{2^n\mathrm{e}^{-\beta/2}I_{\nu}(\frac{1}{2})}{(\frac{1}{2})^{\nu}}\int_{-\infty}^{\infty} \mathrm{e}^{\beta t} |t|^{\nu} K_{\nu}(|t|)\,\mathrm{d}t <\frac{(\mathrm{e}+1)2^{\nu}\Gamma(\nu+\frac{1}{2})}{1-|\beta|}.
\end{equation*}
Combining this bound with the bound for $x\geq \frac{1}{2}$, and using that $\Gamma(\nu+\frac{1}{2})>1$ for $-\frac{1}{2}<\nu<\frac{1}{2}$, completes the proof of part (ii).
\end{proof}

We now specialise to the case $\beta=0$.  

\begin{theorem} \label{ivy} Let $I_{(\nu,0,n)}(x)$ be defined as per equation (\ref{super1}).  Suppose $\nu> -\frac{1}{2}$.  Then, for all $x\geq0$,
\begin{equation*} \left|I_{(\nu,0,n)}(x)  \frac{K_{\nu+n+1}(x)}{x^{\nu}}\right| \leq \frac{2^{n-1}}{2\nu +1}, \quad n\geq 1.
\end{equation*}
\end{theorem}

\begin{proof} Applying inequalities (\ref{intIii}) and (\ref{mice}) gives
\begin{align*}\left|I_{(\nu,0,n)}(x)  \frac{K_{\nu+n+1}(x)}{x^{\nu}}\right|
&< \bigg\{\prod_{k=1}^n \frac{2\nu + 2k}{2\nu + k}\bigg\}K_{\nu +n+1}(x) I_{\nu + n+1}(x) \\
&\leq \frac{1}{2(\nu +n +1)} \prod_{k=1}^n \frac{2\nu + 2k}{2\nu + k}. 
\end{align*}
We can simplify the bound given in the above display by noting
\[\frac{1}{2(\nu+n+1)}\prod_{k=1}^n \frac{2\nu + 2k}{2\nu + k}=\frac{1}{2\nu+1}\cdot \frac{2(\nu+n)}{2(\nu+n+1)}\prod_{k=1}^{n-1}\frac{2\nu+2k}{2\nu+k+1}< \frac{2^{n-1}}{2\nu+1},
\]
as $\frac{2\nu+2k}{2\nu+k+1}\leq \frac{2k-1}{k}<2$, for $\nu>-\frac{1}{2}$ and $k\geq 2$.  This completes the proof.
\end{proof}

\begin{theorem}\label{ml7min} Let $\nu>-\frac{1}{2}$. Then, for all $x\geq0$,
\begin{equation*}\frac{I_{\nu+1}(x)}{x^{\nu}}\int_x^{\infty}t^{\nu}K_{\nu}(y)\,\mathrm{d}t < \frac{1}{2(\nu+1)},
\end{equation*}
and
\begin{equation}\label{ml3min}\frac{I_{\nu}(x)}{x^{\nu}}\int_x^{\infty}t^{\nu}K_{\nu}(t)\,\mathrm{d}t\leq\frac{\sqrt{\pi}\Gamma(\nu+\frac{1}{2})}{2\Gamma(\nu+1)},
\end{equation}
with equality in the limit $x\downarrow0$.
\end{theorem}

\begin{proof}(i) Applying inequality (\ref{Kint11}) and then inequality (\ref{mice}) gives
\begin{equation*}\frac{I_{\nu+1}(x)}{x^{\nu}}\int_x^{\infty}t^{\nu}K_{\nu}(t)\,\mathrm{d}t < \frac{I_{\nu+1}(x)}{x^{\nu}}\cdot x^{\nu}K_{\nu+1}(x) \leq \frac{1}{2(\nu+1)}.
\end{equation*}

(ii) We begin by proving that $\frac{I_{\nu}(x)}{x^{\nu}}\int_x^{\infty}t^{\nu}K_{\nu}(t)\,\mathrm{d}t$ is a decreasing function.  By the differentiation formula (\ref{diffIii}) and inequality (\ref{Kint11}), we have, for $x\geq 0$,
\begin{align*}\frac{\mathrm{d}}{\mathrm{d}x}\bigg(\frac{I_{\nu}(x)}{x^{\nu}}\int_x^{\infty}t^{\nu}K_{\nu}(t)\,\mathrm{d}t\bigg)&=\frac{I_{\nu+1}(x)}{x^{\nu}}\int_x^{\infty}t^{\nu}K_{\nu}(t)\,\mathrm{d}t-I_{\nu}(x)K_{\nu}(x) \\
&\leq I_{\nu+1}(x)K_{\nu+1}(x)-I_{\nu}(x)K_{\nu}(x).
\end{align*}
Theorem 1 of Segura \cite{segura} states that for $x>0$ and $\mu>-\frac{1}{2}$ the inequality $I_{\mu+1}(x)K_{\mu+1}(x)-I_{\mu}(x)K_{\mu}(x)<0$ holds.  Hence, $\frac{I_{\nu}(x)}{x^{\nu}}\int_x^{\infty}t^{\nu}K_{\nu}(t)\,\mathrm{d}t$ is a decreasing function of $x$.

Using the asymptotic property (\ref{Itend0}) of modified Bessel functions $I_{\nu}(x)$ and that $\int_0^{\infty}t^{\nu}K_{\nu}(t)\,\mathrm{d}t=\sqrt{\pi}\Gamma(\nu+\frac{1}{2})2^{\nu-1}$ (see (\ref{pdfk})), we have
\[\lim_{x\rightarrow 0^+}\bigg(\frac{I_{\nu}(x)}{x^{\nu}}\int_x^{\infty}t^{\nu}K_{\nu}(t)\,\mathrm{d}t\bigg)=\frac{\sqrt{\pi}\Gamma(\nu+\frac{1}{2})}{2\Gamma(\nu+1)},\]
proving the result.
\end{proof}

We end this section by presenting some inequalities involving integrals of $I_0(x)$ and $K_0(x)$.  Bounds on these quantities are required in the development of Stein's method for product normal approximation; see Gaunt \cite{gaunt pn}.

\begin{theorem}For all $x\geq0$,
\begin{eqnarray*}\left|xI_{(0,0,n)}(x)  K_{n}(x)\right| &\leq& 2^{n-1}, \quad n=1,2,3,\ldots,\\
xI_{(0,0,1)}(x)  K_0(x) &<& 1,\\
xI_0(x)\int_x^{\infty}K_0(t)\,\mathrm{d}t&<& 0.615,\\
xI_1(x)\int_x^{\infty}K_0(t)\,\mathrm{d}t&\leq& \frac{1}{2}.
\end{eqnarray*}
\end{theorem}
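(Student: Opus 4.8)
The four displayed inequalities are the $\nu=0$ instances of (sharper forms of) Theorems \ref{ivy}, \ref{ml7min} and \ref{ml3}, so my plan is to re-run those arguments at $\nu=0$. The one genuinely new feature is the extra factor $x$: it forces one to control $xI_{\nu}(x)K_{\nu}(x)$ rather than $I_{\nu}(x)K_{\nu}(x)$, and Corollary \ref{IKineq} (which gives $I_{\nu}K_{\nu}\leq 1/(2\nu)$) is unavailable at $\nu=0$ and in any case useless for large $x$. So the first step is to isolate the auxiliary inequality
\[ xI_{\nu}(x)K_{\nu}(x)<\tfrac12 ,\qquad \nu\geq\tfrac12,\ x>0 . \]
Once this is in hand the first, second and fourth bounds follow by short computations; the third is more delicate.

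For the auxiliary inequality I would use the fact that, for fixed $x>0$, the map $\nu\mapsto I_{\nu}(x)K_{\nu}(x)$ is decreasing on $[0,\infty)$ — the continuous counterpart of the Segura inequality already invoked in the proof of Theorem \ref{ml7min}, which can be cited from Baricz \cite{baricz}. Then $xI_{\nu}(x)K_{\nu}(x)\leq xI_{1/2}(x)K_{1/2}(x)$ for $\nu\geq\tfrac12$, and by the elementary closed forms $I_{1/2}(x)=\sqrt{2/(\pi x)}\sinh x$ and $K_{1/2}(x)=\sqrt{\pi/(2x)}\,\mathrm{e}^{-x}$ (the latter being (\ref{sphere})) one gets $xI_{1/2}(x)K_{1/2}(x)=\mathrm{e}^{-x}\sinh x=\tfrac12(1-\mathrm{e}^{-2x})<\tfrac12$. (Alternatively one could try to show directly that $x\mapsto xI_{\nu}(x)K_{\nu}(x)$ is increasing via the Bessel recurrences, but that route looks less clean.)

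Granting the auxiliary inequality, the first bound follows from Corollary \ref{mail}, which gives $|K_0^{(n)}(x)|\leq K_n(x)$ for all $n\geq1$, together with inequality (\ref{intIii}) at $\nu=0$, which gives $0<I_{(0,0,n)}(x)<2^nI_n(x)$: hence $\bigl|xI_{(0,0,n)}(x)K_0^{(n)}(x)\bigr|<2^n\,xI_n(x)K_n(x)<2^{n-1}$, and boundedness at the endpoints $x\downarrow0$, $x\to\infty$ is read off from the asymptotics (\ref{Itend0})--(\ref{Ktendinfinity}) exactly as in the proof of Proposition \ref{appd1}. The second bound is the case $n=1$ combined with $K_0(x)\leq K_1(x)$ (inequality (\ref{cake})): $xI_{(0,0,1)}(x)K_0(x)\leq xI_{(0,0,1)}(x)K_1(x)<2xI_1(x)K_1(x)<1$. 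The fourth bound uses $\int_x^{\infty}K_0(t)\,\mathrm{d}t<K_1(x)$ (inequality (\ref{Kint11}) at $\nu=0$), so that $xI_1(x)\int_x^{\infty}K_0(t)\,\mathrm{d}t<xI_1(x)K_1(x)<\tfrac12$.

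The third bound, $\bigl\|xI_0(x)\int_x^{\infty}K_0(t)\,\mathrm{d}t\bigr\|<0.615$, is where the real work lies: neither Corollary \ref{IKineq} nor the clean factorisations above are available, and Theorem \ref{ml7min} controls the product only without the factor $x$. My plan is to start from $K_0(t)<K_{1/2}(t)=\sqrt{\pi/(2t)}\,\mathrm{e}^{-t}$ and $t^{-1/2}<x^{-1/2}$ on $(x,\infty)$, which give
\[ \int_x^{\infty}K_0(t)\,\mathrm{d}t<\sqrt{\tfrac{\pi}{2}}\int_x^{\infty}t^{-1/2}\mathrm{e}^{-t}\,\mathrm{d}t<\sqrt{\tfrac{\pi}{2x}}\,\mathrm{e}^{-x}, \]
so that $xI_0(x)\int_x^{\infty}K_0(t)\,\mathrm{d}t<\sqrt{\pi/2}\,\bigl(\sqrt{x}\,\mathrm{e}^{-x}I_0(x)\bigr)$, and then to bound the one-variable function $g(x):=\sqrt{x}\,\mathrm{e}^{-x}I_0(x)$ by an explicit constant strictly below $0.615\sqrt{2/\pi}\approx0.49$. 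Here $g(0)=0$, $g(x)\to(2\pi)^{-1/2}\approx0.40$ as $x\to\infty$, and $g$ has a single interior maximum of value $\approx0.47$. For $x$ above the maximiser one can show $g$ is decreasing — equivalently $I_1(x)/I_0(x)<1-\tfrac1{2x}$, a consequence of a sharp bound on the ratio $I_1/I_0$ — and bound $g$ by its value at a threshold; for smaller $x$ one exploits that $\sqrt{x}$ is increasing while $\mathrm{e}^{-x}I_0(x)$ is decreasing (the latter since $I_1<I_0$) to estimate $g$ on a handful of subintervals. The main obstacle is exactly this: the true maximum $\approx0.47$ is uncomfortably close to the required constant $\approx0.49$, so crude one-sided bounds will not suffice and one needs either a genuinely sharp bound on $\mathrm{e}^{-x}I_0(x)$ (equivalently on $I_0(x)K_1(x)$) for moderate $x$, or an explicit, somewhat tedious, interval check.
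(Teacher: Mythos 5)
Your treatment of the first, second and fourth inequalities is sound and runs parallel to the paper's, differing only in how you obtain the auxiliary bound $xI_n(x)K_n(x)\leq\frac12$. The paper gets it from Hartman's Theorem 4.1 ($xI_{\mu}(x)K_{\mu}(x)$ is increasing in $x$ for $\mu>\frac12$) together with the limit $\frac12$ at infinity, whereas you propose monotonicity of $I_{\nu}(x)K_{\nu}(x)$ in the \emph{order} $\nu$ plus the closed form $xI_{1/2}(x)K_{1/2}(x)=\frac12(1-\mathrm{e}^{-2x})$. That route works, but be careful with the citation: Proposition \ref{IKmon} (Baricz) as used in this paper is monotonicity in $x$, and Segura's inequality only compares $I_{\mu}K_{\mu}$ with $I_{\mu+1}K_{\mu+1}$ in unit steps, which does not by itself let you compare $I_nK_n$ (integer $n$) with $I_{1/2}K_{1/2}$; you need the continuous-order monotonicity result, which is true and known but must be cited properly.

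The genuine gap is the third inequality. Your plan reduces it to showing $\sqrt{x}\,\mathrm{e}^{-x}I_0(x)<0.615\sqrt{2/\pi}\approx0.4907$, and you correctly observe that the maximum of this function ($\approx0.469$ near $x\approx0.8$) leaves only a few percent of slack, so that the crude bounds you have available (e.g.\ $\mathrm{e}^{-x}I_0(x)<\frac12(1+\mathrm{e}^{-2x})$ from (\ref{mlbd7})) fail in the middle range $x\approx0.5$--$1$; you explicitly leave the required sharp estimate or interval check undone, so this part of the proof is not closed. The paper avoids the difficulty entirely: it bounds $\int_x^{\infty}K_0(t)\,\mathrm{d}t<K_0(x)$ directly (inequality (\ref{KBinteqi}) with $\beta=0$, $\nu=0$), reducing the problem to the single function $xI_0(x)K_0(x)$, and then invokes Hartman's Theorem 4.2 (this product is unimodal in $x$) together with a two-point sign check of its derivative at $x=1$ and $x=1.1$ to locate the maximiser in $(1,1.1)$ and bound the maximum by $1.1\,I_0(1.1)K_0(1)=0.614\ldots<0.615$ using monotonicity of $I_0$ and $K_0$. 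I recommend you either adopt that argument or supply the missing rigorous estimate on $\sqrt{x}\,\mathrm{e}^{-x}I_0(x)$ before considering the third bound proved.
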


\begin{proof}(i) From inequality (\ref{intIii}), we have
\[\left|xI_{(0,0,n)}(x)  K_n(x)\right|\leq 2^nxI_n(x)K_n(x).\]
From Theorem 4.1 of Hartman \cite{hartman}, we have that, for $\mu>\frac{1}{2}$, the function $xI_{\mu}(x)K_{\mu}(x)$ is an increasing function of $x$ (see also Baricz et al$.$ \cite{baricz3} for a number of upper bounds for the product $I_{\nu}(x)K_{\nu}(x)$).  On applying the asymptotic formulas (\ref{Ktendinfinity}) and (\ref{Ktendinfinity}) it follows that $\lim_{x\rightarrow \infty}xI_n(x)K_n(x)=\frac{1}{2}$, from which the desired inequality now follows.

(ii) This follows on applying the bound from part (i) and using the inequality $K_0(x)\leq K_1(x)$.

(iii) Applying inequality (\ref{KBinteqi}) gives
\[xI_0(x)\int_x^{\infty}K_0(t)\,\mathrm{d}t\leq xI_0(x)K_0(x).\]
Theorem 4.2 of Hartman \cite{hartman} states that there exists a constant $\tau$ such that the function $xI_{0}(x)K_{0}(x)$ is increasing on $(0,\tau)$ and decreasing on $(\tau,\infty)$.  The derivative of $xI_{0}(x)K_{0}(x)$ is given by $I_0(x)K_0(x)+x(I_1(x)K_0(x)-I_0(x)K_1(x))$, and is positive at $x=1$ and negative at $x=1.1$.  As $I_{0}(x)$ is an increasing function of $x$ and $K_{1}(x)$ is a decreasing function of $x$, it follows, on invoking the theorem of Hartman, that $xI_{0}(x)K_{0}(x)$ is bounded by $1.1I_0(1.1)K_1(1)=0.614\ldots<0.615$. 

(iv) Applying inequality (\ref{Kint11}) and arguing as we did in part (i) yields
\[xI_1(x)\int_x^{\infty}K_0(t)\,\mathrm{d}t\leq xI_1(x)K_1(x)\leq\frac{1}{2},\]
as required.
\end{proof}

\appendix

\section{Elementary properties of modified Bessel functions}

Here we list standard properties of modified Bessel functions that are used throughout this paper.  All these formulas can be found in Olver et al$.$ \cite{olver}, unless otherwise stated.  Inequalities (\ref{pidgeon}) and (\ref{Imon}) can be found in Jones \cite{jones} and N\r{a}sell \cite{nasell}; inequalities (\ref{bear}) and (\ref{cake}) can easily be deduced from (\ref{tellas}).  The integration formulas can be found in Gradshetyn and Ryzhik \cite{gradshetyn}.

\subsection{Basic properties}
The modified Bessel functions $I_{\nu}(x)$ and $K_{\nu}(x)$ are both regular functions of $x$.  For positive values of $x$ the functions $I_{\nu}(x)$ and $K_{\nu}(x)$ are positive for $\nu>-1$ and all $\nu\in\mathbb{R}$, respectively.

\subsection{Series expansions}
\begin{align}\label{defI}I_{\nu} (x) = \sum_{k=0}^{\infty} \frac{1}{\Gamma(\nu +k+1) k!} \left( \frac{x}{2} \right)^{\nu +2k}.
\end{align}

\subsection{Spherical Bessel functions}
\begin{eqnarray} \label{spheress} I_{-\frac{1}{2}}(x)&=&\sqrt{\frac{2}{\pi x}}\cosh(x), \\
 \label{sphere} K_{\frac{1}{2}}(x)&=&K_{-\frac{1}{2}}(x)=\sqrt{\frac{\pi}{2x}} \mathrm{e}^{-x}.
\end{eqnarray}

\subsection{Integral representations}
\begin{equation} \label{tellas} K_{\nu}(x)=\int_0^{\infty}\mathrm{e}^{-x\cosh(t)}\cosh(\nu t)dt, \quad x>0.
\end{equation}

\subsection{Asymptotic expansions}
\begin{eqnarray}\label{Itend0}I_{\nu} (x) &\sim& \frac{1}{\Gamma(\nu +1)} \left(\frac{x}{2}\right)^{\nu}, \quad x \downarrow 0, \\
\label{Ktend0}K_{\nu} (x) &\sim& \begin{cases} 2^{|\nu| -1} \Gamma (|\nu|) x^{-|\nu|}, & \quad x \downarrow 0, \: \nu \not= 0, \\
-\log x, & \quad x \downarrow 0, \: \nu = 0, \end{cases} \\
 \label{roots} I_{\nu} (x) &\sim& \frac{\mathrm{e}^x}{\sqrt{2\pi x}}, \quad x \rightarrow \infty,  \\
\label{Ktendinfinity} K_{\nu} (x) &\sim& \sqrt{\frac{\pi}{2x}} \mathrm{e}^{-x}, \quad x \rightarrow \infty.
\end{eqnarray}

\subsection{Inequalities}
Let $x > 0$, then following inequalities hold
\begin{eqnarray} \label{pidgeon} I_{\mu} (x) &<& I_{\nu } (x), \quad 0\leq \nu < \mu, \\
\label{Imon}I_{\nu} (x) &<& I_{\nu - 1} (x), \quad \nu \geq \tfrac{1}{2}, \\
 \label{bear} K_{\mu} (x) &>& K_{\nu } (x), \quad 0\leq \nu < \mu, \\
\label{cake}K_{\nu} (x) &\geq& K_{\nu - 1} (x), \quad \nu \geq \tfrac{1}{2}.  
\end{eqnarray}
We have equality in (\ref{cake}) if and only if $\nu=\frac{1}{2}$.

\subsection{Differentiation}
\begin{eqnarray}
\label{dddkkkk}\frac{\mathrm{d}}{\mathrm{d}x}(x^\nu K_\nu(x))&=&-x^\nu K_{\nu-1}(x), \\
\label{diffIii}\frac{\mathrm{d}}{\mathrm{d}x} \left(\frac{I_{\nu} (x)}{x^{\nu}} \right) &=& \frac{I_{\nu +1} (x)}{x^{\nu}}.
\end{eqnarray}

\subsection{Integration}
\begin{eqnarray} \label{pdfk} \int_{-\infty}^{\infty}\mathrm{e}^{\beta t} |t|^{\nu} K_{\nu}(|t|)\,\mathrm{d}t &=&\frac{\sqrt{\pi}\Gamma(\nu+\frac{1}{2})2^{\nu}}{(1-\beta^2)^{\nu+\frac{1}{2}}}, \quad \nu>-\tfrac{1}{2}, \: -1<\beta <1, \\
 \label{pokemon} \int_0^x\mathrm{e}^{-t}t^{\nu}I_{\nu}(t)\,\mathrm{d}t &=&\frac{\mathrm{e}^{-x}x^{\nu+1}}{2\nu+1}[I_{\nu}(x)+I_{\nu+1}(x)], \quad \nu>-\tfrac{1}{2}.
\end{eqnarray}

\section*{Acknowledgements}During the course of this research, the author was supported by an EPSRC DPhil Studentship and an EPSRC Doctoral Prize.  The author is currently supported by supported by EPSRC grant EP/K032402/1.  The author would like to thank Gesine Reinert for the valuable guidance she provided on this project.  Finally, the author would like to thank the referee for some useful comments.

\end{document}